\documentclass[12pt,reqno]{amsart}
\usepackage[colorlinks=true, pdfstartview=FitV, linkcolor=blue, citecolor=blue, urlcolor=blue]{hyperref}

\usepackage{amscd, amssymb,latexsym,amsmath,  amsmath}
\usepackage[all]{xy}
\usepackage{anysize}
\usepackage[sc]{mathpazo} 
\usepackage{color}
\usepackage{hyperref}

\hypersetup{
    colorlinks=true,
    linkcolor=blue,
    filecolor=magenta,      
    urlcolor=cyan,
    pdftitle={Overleaf Example},
    pdfpagemode=FullScreen,
    }
   
\urlstyle{same}
\marginsize{2.5cm}{2.5cm}{2.5cm}{2.5cm}
\linespread{1.10}

%----------------------------------------------------------------------------
\newtheorem{theorem}{Theorem}[section] %(If you want theorem numbered
\newtheorem{lemma}[theorem]{Lemma}%               with section number.  Same
%       goes for lemmas, etc.)

\theoremstyle{definition}

%\numberwithin{conj}{section}
\newtheorem{example}{Example}
%\numberwithin{example}{section}

%\numberwithin{definition}{section}

\theoremstyle{remark}
\newtheorem{remark}{Remark}

%\theoremstyle{ack}

%\numberwithin{remark}{section}

%----------------------------------------------
%commands to save on typing

\newcommand{\pa}{\partial}

\newcommand{\Om}{\Omega}

\newcommand{\ga}{\gamma}

\newcommand{\la}{\lambda}

\newcommand{\diag}{\operatorname{diag}}

\newcommand{\Ric}{\operatorname{Ric}}
\renewcommand{\Re}{\operatorname{Re}}
\renewcommand{\Im}{\operatorname{Im}}

%---------------------------------------------

\begin{document}

\title[the KOBAYASHI--FUKS METRIC on strictly pseudoconvex domains]{On the Boundary Behaviour of Invariants and Curvatures of the Kobayashi--Fuks Metric in Strictly Pseudoconvex Domains}
\keywords{Kobayashi-Fuks metric, Bergman kernel, strictly pseudoconvex domains, scaling method}
\subjclass{Primary: 32F45; Secondary: 32A25, 32A36}
\author{Anjali Bhatnagar}
\address{Indian Institute of Science Education and Research Pune, Pune~411008, India}
\email{anjali.bhatnagar@students.iiserpune.ac.in}

\begin{abstract}
The purpose of this article is to investigate the boundary behaviour of the Kobayashi--Fuks metric and several associated invariants on strictly pseudoconvex domains in the paradigm of scaling. This approach allows us
to examine more invariants, such as the canonical invariant, holomorphic sectional
curvature, and Ricci curvature of this metric, in a manner that extends and refines some existing analysis.
\end{abstract}

\date{}

\maketitle

%------------------------------------------------------
  
\section{Introduction}
The Riemann mapping theorem is a landmark result in complex analysis, stating that any simply connected planar domain, except the complex plane $\mathbb{C}$, is holomorphically equivalent to the unit disc $\mathbb{D}$. Bergman introduced the Bergman representative coordinates---a tool in his program to generalize the Riemann mapping theorem in higher dimensions. The Kobayashi–Fuks metric is crucial in studying the Bergman representative coordinates. It was highlighted by Dinew in \cite{d11}, who initiated the study of this metric. Subsequently, in \cite{d13}, Dinew investigated the completeness of the Kobayashi–Fuks metric across a broad class of pseudoconvex domains. Borah-Kar \cite{bk22} examined the boundary behaviour of the Kobayashi–Fuks metric and some of its associated invariant objects for strictly pseudoconvex domains---using the localization of these invariants for pseudoconvex domains near local holomorphic peak points. It was achieved by expressing these invariants in terms of certain maximal domain functions, a technique inspired by Krantz-Yu \cite{kgy96}. Krantz-Yu draws inspiration from the classical approach of Bergman \cite{bg70}, which utilizes the minimum integral method and Bedford-Pincuk's scaling method \cite{bp91,p91}. Similar ideas are found in several other works (cf. \cite{k92, y95, ky96}). It is of natural interest to study the boundary behaviour of invariant objects without using their localization principle, as this simplifies the classical approach and offers a fresh perspective.

In this article, we study the boundary behaviour of the Kobayashi-Fuks metric and its associated invariants in strictly pseudoconvex domains, utilizing the scaling principle while bypassing the localization of these invariants. By doing so, we can explore more invariants, such as the canonical invariant, holomorphic sectional curvature, and the Ricci curvature of the Kobayashi-Fuks metric in a more natural manner as compared to \cite{bk22}. To set the stage, let \(\Omega \subset \mathbb{C}^n\) be a bounded domain. The \textit{Bergman space} $A^2(\Omega)$ is defined as the space of holomorphic functions in $\Omega$, which are square-integrable with respect to the Lebesgue volume measure on $\Omega$. It is a reproducing kernel Hilbert space whose reproducing kernel \(K_{\Omega}(z,w)\) is known as the \textit{Bergman kernel}. The Bergman kernel is uniquely characterized by the following: for each \(z \in \Omega\), \(K_{\Omega}(\cdot, z) \in A^2(\Omega)\); for all \(z, w \in \Omega\), \(K_{\Omega}(z,w) = \overline{K_{\Omega}(w,z)}\); and for each \(f \in A^2(\Omega)\),
\[
f(z) = \int_{\Omega} f(w) K_{\Omega}(z,w) \, dV\;\;\;\text{ for all }z\in\Omega.
\]
Moreover, in terms of any complete orthonormal basis \(\{\phi_k\}\) of \(A^2(\Omega)\), 
\begin{equation}\label{Berg-ker-onb}
    K_{\Omega}(z,w) = \sum_{k} \phi_k(z) \overline{\phi_{k}(w)},
\end{equation}
where the series converges uniformly on compact subsets of \(\Omega \times \Omega\). Consecutively, the function $\log K_\Omega(z,z)$ is a $C^\infty$-smooth, strictly plurisubharmonic function. Thus, it is a potential for a K\"ahler metric called the Bergman metric $ds^2_{b,\Omega}$ on $\Omega$, which is defined by
\[
ds^2_{b,\Omega}=\sum_{\alpha,\beta=1}^n g^{b,\Omega}_{\alpha\overline \beta} (z) \, dz_{\alpha}d\overline z_{\beta},
\]
where
\[
g^{b,\Omega}_{\alpha \overline \beta}(z)=\frac{\partial^2 \log K_{\Omega}(z,z)}{\partial z_{\alpha} \partial \overline z_{\beta}}.
\]
The Riemannian volume
element of $ds^2_{b,\Omega}$ is given by
\[
g_{b,\Omega}(z)=\det G_{b,\Omega}(z),\quad\text{where}\quad G_{b,\Omega}(z)=\begin{pmatrix}g^{b,\Omega}_{\alpha \overline \beta}(z)\end{pmatrix}_{n \times n}.
\]
The Ricci curvature of $ds^2_{b,\Omega}$ is given by
\begin{equation}\label{Ric_h-curv}
\text{Ric}_{b,\Omega}(z,X)=\frac{\sum_{\alpha, \beta=1}^n \Ric_{\alpha \overline \beta}^{b,\Omega}(z) X^{\alpha} \overline X^{\beta}}{\sum_{\alpha,\beta=1}^n g^{b,\Omega}_{\alpha\overline \beta}(z)X^\alpha \overline X^\beta}, \text{ where }\Ric_{\alpha \overline \beta}^{b,\Omega}(z)=  - \frac{\partial^2 \log g_{b,\Omega}(z)}{\partial z_{\alpha} \partial \overline z_{\beta}}.
\end{equation}
In \cite{Kob59}, Kobayashi demonstrated that the Ricci curvature of the Bergman metric is strictly bounded above by $n+1$ on bounded domains of $\mathbb C^n$, and thus the matrix
\[
G_{\tilde b,\Omega}(z)=\begin{pmatrix}g^{\tilde b,\Omega}_{\alpha \overline \beta}(z)\end{pmatrix}_{n \times n},
\]
where
\[
g^{\tilde b,\Omega}_{\alpha \overline \beta}(z)=(n+1)g^{b,\Omega}_{\alpha\overline \beta}(z)-\Ric^{b,\Omega}_{\alpha\overline \beta}(z) = \frac{\partial^2 \log \Big(K_\Omega(z,z)^{n+1}g_{b,\Omega}(z)\Big)}{\partial z_{\alpha} \partial \overline z_{\beta}}
\]
is positive definite; see \cite{Fuks66}. This induces the Kobayashi--Fuks metric on $\Omega$, as follows
\[ds^2_{\tilde b,\Omega}=\sum_{\alpha,\beta=1}^n g^{\tilde b,\Omega}_{\alpha \overline \beta}(z)\,dz_{\alpha}d\overline z_{\beta}.
\]
Clearly, $ds^2_{\tilde b,\Omega}$ is a K\"ahler metric, with the K\"ahler potential $\log \Big(K_\Omega(z,z)^{n+1}g_{b,\Omega}(z)\Big)$. Furthermore, if $F: \Omega_1 \to \Omega_2$ is a biholomorphism, then
\begin{equation}\label{tr-kf}
G_{\tilde b, \Omega_1}(z)=J_\mathbb C F(z)^t\, G_{\tilde b, \Omega_2}\big(F(z)\big) \overline {J_\mathbb C F(z)},
\end{equation}
 where $J_\mathbb C F$ is the complex Jacobian matrix of $F$. Hence, $ds^2_{\tilde b, \Omega}$ is an invariant metric. Here are some examples from \cite{bk22}.
 \begin{example}\label{ball}
Let $\mathbb{B}^n \subset \mathbb{C}^n$ be the unit ball. Then,
\[
ds^2_{\tilde b, \mathbb{B}^n}=(n+2)ds^2_{b, \mathbb{B}^n} = (n+1)(n+2)\sum_{\alpha,\beta=1}^n \left(\frac{\delta_{\alpha\overline\beta}}{1-\vert z \vert^2}+\frac{\overline z_{\alpha}z_{\beta}}{(1-\vert z \vert^2)^2}\right) dz_{\alpha} d\overline z_{\beta}.
\]
 \end{example}
 \begin{example}
     Let $\mathbb{D}^n \subset \mathbb{C}^n$ be the unit polydisc. Then,
     \[ds^2_{\tilde b, \mathbb{D}^n}=(n+2)ds^2_{b, \mathbb{D}^n} =2(n+2)\sum_{\alpha=1}^n\frac{1}{(1-|z_\alpha|^2)^2}|dz_\alpha |^2.\]
 \end{example}

To state our main results, we introduce some further notations. Let $h$ denote either $b$ or $\tilde b$. Then, we define
\[
G_{h,\Omega}(z)=\begin{pmatrix} g^{h,\Omega}_{\alpha \overline \beta}(z)\end{pmatrix}_{n \times n} \quad \text{and} \quad g_{h,\Omega}(z)=\det G_{h,\Omega}(z),
\]
which induces a canonical invariant
\[\beta_{h,\Omega}(z)=\frac{g_{h,\Omega}(z)}{K_\Omega(z)}.\]
The length of a tangent vector $X$ at $z \in \Omega$ in $ds^2_{h,\Omega}$ is denoted by $ds_{h,\Omega}(z,X)$, where
\[
ds^2_{h,\Omega}(z,X)=\sum_{\alpha,\beta=1}^n g^{h,\Omega}_{\alpha \overline \beta}(z)X^{\alpha} \overline X^{\beta}.
\]
The holomorphic sectional curvature of $ds^2_{h,\Omega}$ is given by
\begin{equation}\label{hsc-h}
R_{h,\Omega}(z,X)=\frac{\sum_{\alpha,\beta,\ga,\delta=1}^n R^{h,\Omega}_{\overline \alpha \beta \ga \overline \delta}(z)\overline X^{\alpha} X^{\beta} X^{\ga} \overline X^{\delta}}{ds^2_{h,\Omega}(z,X)^2},
\end{equation}
where
\begin{equation}
R^{h,\Omega}_{\overline \alpha \beta \ga \overline \delta}(z)=-\frac{\partial^2 g^{h,\Omega}_{\beta \overline \alpha}}{\partial z_{\ga} \partial \overline z_{\delta}}(z)+\sum_{\mu,\nu} g_{h,\Omega}^{\nu \overline \mu}(z)\frac{\partial g^{h,\Omega}_{\beta \overline \mu}}{\partial z_{\ga}}(z)\frac{\partial g^{h,\Omega}_{\nu \overline \alpha}}{\partial \overline z_{\delta}}(z).
\end{equation}
Here, $g_{h,\Omega}^{\nu \overline \mu}(z)$ being the $(\nu,\mu)$-th entry of $G_{h,\Omega}(z)^{-1}$. The Ricci curvature of $ds^2_{h,\Omega}$ is defined by \eqref{Ric_h-curv} with $b$ replaced by $h$.

To state the main results of this article, we recall the following notations. Let \( \delta_\Omega(z) \) denote the Euclidean distance from the point \( z \in \Omega \) to the boundary \( \partial \Omega \). For \( z \) sufficiently close to \( \partial \Omega \), let \( \pi(z) \in \partial \Omega \) be the nearest point to \( z \) such that \( \delta_\Omega(z) = |z - \pi(z)| \) and for a tangent vector \( X \in \mathbb{C}^n \) based at \( z \), we can decompose \( X \) as \( X = X_H(z) + X_N(z) \), where \( X_H(z) \) and \( X_N(z) \) represent the components along the tangential and normal directions, respectively at \( \pi(z) \). Let \( \mathcal{L}_{\partial \Omega} \) denote the Levi form of \( \partial \Omega \) with respect to some defining function of \( \Omega \).
\begin{theorem}\label{bdy-n}
    Let $\Omega \subset \mathbb{C}^n$ be a $C^2$-smoothly bounded strictly pseudoconvex domain, and let $p^0 \in \partial \Omega$. Then as $z\to p^0$, we have
\begin{align*}
&\text{\em (a)}& &\delta_\Omega(z)^{n+1} g_{\tilde b,\Omega}(z) \to \frac{(n+1)^n(n+2)^n}{2^{n+1}},&\\
&\text{\em (b)}& &\delta_{\Omega}(z)\, ds_{\tilde b,\Omega}(z,X_N(z))\to \frac{1}{2}\sqrt{(n+1)(n+2)}\,\vert X_N(p^0)\vert, &\\
&\text{\em (c)}& & \sqrt{\delta_{\Omega}(z)}\, ds_{\tilde b, \Omega}\big(z,X_{H}(z)\big)\to \sqrt{\frac{1}{2}(n+1)(n+2)\mathcal{L}_{\partial \Omega}\big(p,X_H(p^0)\big)},&\\
&\text{\em (d)}& &\beta_{\tilde b,\Omega}(z) \to (n+1)^n(n+2)^n\frac{\pi^n}{n!},&\\
&\text{\em (e)}& &R_{\tilde b, \Omega}(z,X)\to -\frac{2}{(n+1)(n+2)},\quad X\in\mathbb{C}^n\setminus\{0\}, &
\\
&\text{\em (f)}& &\Ric_{\tilde b,\Omega}(z,X)\to -\frac{1}{(n+2)},\quad X\in\mathbb{C}^n\setminus\{0\}. &
\end{align*}
 
\end{theorem}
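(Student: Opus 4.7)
The plan is to implement Pinchuk's scaling method and exploit the biholomorphic invariance \eqref{tr-kf} of the Kobayashi--Fuks metric. Fix a sequence $z_\nu\to p^0$; after a rigid motion we may take $p^0=0$ with the inward real normal to $\partial\Omega$ at $0$ along the positive $z_1$-axis. Using strict pseudoconvexity, I would construct a sequence of polynomial automorphisms $T_\nu$ of $\mathbb{C}^n$ (straightening $\partial\Omega$ near $\pi(z_\nu)$ to the Levi polynomial) together with the anisotropic dilation $D_\nu=\diag\bigl(\delta_\Omega(z_\nu)^{-1},\delta_\Omega(z_\nu)^{-1/2},\ldots,\delta_\Omega(z_\nu)^{-1/2}\bigr)$, so that $F_\nu:=D_\nu\circ T_\nu$ sends $z_\nu$ to a fixed base point $q\in\Sigma$ and the scaled domains $\Omega_\nu:=F_\nu(\Omega)$ converge, in the local Hausdorff sense, to the Siegel half-space $\Sigma=\{w\in\mathbb{C}^n:2\Re w_1+|w'|^2<0\}$, itself biholomorphic to $\mathbb{B}^n$ via the Cayley transform.

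Applying \eqref{tr-kf} to $F_\nu$ rewrites every quantity on $\Omega$ at $z_\nu$ in terms of its counterpart on $\Omega_\nu$ at $q$, up to an explicit power of $\det J_\mathbb{C} D_\nu=\delta_\Omega(z_\nu)^{-(n+1)/2}$ and a factor $\det J_\mathbb{C} T_\nu$ tending to a unitary determinant. To pass to the limit I would invoke a $C^k$-stability theorem of Ramadanov type: under local Hausdorff convergence of uniformly bounded strictly pseudoconvex domains, $K_{\Omega_\nu}(w,w)$ and all its partial derivatives of any fixed order converge uniformly on compacta of the limit domain to $K_\Sigma(w,w)$. This yields convergence of $g^{b,\Omega_\nu}_{\alpha\bar\beta}$, $g_{b,\Omega_\nu}$, and $\Ric^{b,\Omega_\nu}_{\alpha\bar\beta}$, and hence of $g^{\tilde b,\Omega_\nu}_{\alpha\bar\beta}=(n+1)g^{b,\Omega_\nu}_{\alpha\bar\beta}-\Ric^{b,\Omega_\nu}_{\alpha\bar\beta}$, $g_{\tilde b,\Omega_\nu}$, $R^{\tilde b,\Omega_\nu}_{\bar\alpha\beta\gamma\bar\delta}$, and $\Ric^{\tilde b,\Omega_\nu}_{\alpha\bar\beta}$, to the corresponding data on $\Sigma$ evaluated at $q$.

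Because $\Sigma$ is biholomorphic to $\mathbb{B}^n$, one can read off the explicit limits from Example~\ref{ball} together with $K_{\mathbb{B}^n}(z)=\frac{n!}{\pi^n}(1-|z|^2)^{-(n+1)}$, using $g^{\tilde b,\mathbb{B}^n}=(n+2)g^{b,\mathbb{B}^n}$ and $\Ric^{b,\mathbb{B}^n}=-g^{b,\mathbb{B}^n}$. A direct calculation on $\Sigma$ at $q=(-1,0,\ldots,0)$ gives $g_{b,\Sigma}(q)=(n+1)^n/2^{n+1}$, $K_\Sigma(q)=n!/(\pi^n\,2^{n+1})$, and $g_{\tilde b,\Sigma}(q)=(n+1)^n(n+2)^n/2^{n+1}$. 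Part (a) is then immediate: the weight $\delta_\Omega(z_\nu)^{n+1}$ on the left cancels $|\det J_\mathbb{C} D_\nu|^2=\delta_\Omega(z_\nu)^{-(n+1)}$ from the transformation, while $|\det J_\mathbb{C} T_\nu|^2\to 1$, leaving precisely $g_{\tilde b,\Sigma}(q)$. Part (d) follows from biholomorphic invariance of $\beta_{\tilde b}$, and parts (e)--(f) are consequences of $\Ric^{\tilde b,\mathbb{B}^n}=\Ric^{b,\mathbb{B}^n}=-g^{b,\mathbb{B}^n}=-(n+2)^{-1}g^{\tilde b,\mathbb{B}^n}$ together with the scaling relation between the Bergman and Kobayashi--Fuks holomorphic sectional curvatures. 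For (b) and (c), the weights $\delta_\Omega(z_\nu)$ and $\sqrt{\delta_\Omega(z_\nu)}$ on the left compensate, respectively, the $\delta^{-1}$ and $\delta^{-1/2}$ scaling of $X_N$ and $X_H$ under $D_\nu$; the Levi form in (c) arises from the quadratic tangential part of the straightening map $T_\nu$.

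The main obstacle I anticipate is the $C^k$-stability of the Bergman kernel along $\{\Omega_\nu\}$. While $C^0$ convergence is classical, uniform convergence of derivatives up to order four---needed to handle $R^{\tilde b}$ in \eqref{hsc-h}---requires uniform interior estimates on the mixed derivatives of $K_{\Omega_\nu}$ across the scaled sequence. Such estimates can be obtained by combining a uniform family of plurisubharmonic peak or bumping functions at boundary points of $\Omega_\nu$ with Cauchy-type estimates on elements of $A^2(\Omega_\nu)$; once this is in hand, the remainder of the argument reduces to careful bookkeeping of the scaling factors produced by \eqref{tr-kf}.
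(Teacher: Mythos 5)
Your overall strategy matches the paper's: Pinchuk's change of coordinates and anisotropic dilations carry $\zeta^j\to p^0$ to a fixed base point $b^*$ in scaled domains converging to the Siegel half-space, and biholomorphic invariance of the Kobayashi--Fuks data transfers the desired limits to explicit computations on $\mathbb{B}^n$ via the Cayley transform. Your explicit values at $b^*$ ($K_{\Omega_\infty}(b^*)=n!/(\pi^n 2^{n+1})$, $g_{\tilde b,\Omega_\infty}(b^*)=(n+1)^n(n+2)^n/2^{n+1}$, and the reductions $\Ric^{\tilde b,\mathbb{B}^n}=\Ric^{b,\mathbb{B}^n}$, $R_{\tilde b,\mathbb{B}^n}=\frac{1}{n+2}R_{b,\mathbb{B}^n}$) are all correct and coincide with the paper's Lemma on the ball.

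Where you diverge from the paper---and where your proposal has a real gap---is in the stability of the Bergman kernel along the scaled sequence. First, you misdiagnose the hard part: once $K_{\widetilde\Omega_j}(z,w)\to K_{\Omega_\infty}(z,w)$ uniformly on compacta, convergence of \emph{all} derivatives is automatic (the kernel is holomorphic in $(z,\bar w)$, so Cauchy estimates or harmonicity give it for free); no separate interior estimates on mixed derivatives are required. The genuine obstacle is obtaining the $C^0$ convergence in the first place for the scaled \emph{full} domain $\widetilde\Omega_j=S_j(\Omega)$. The usual Ramadanov-type argument (and the containment $H(\widetilde\Omega_j')\subset(1+\epsilon)\mathbb{B}^n$ it requires, cf.\ \eqref{subset3}) only applies to the scaled \emph{localized} domain $\widetilde\Omega_j'=S_j(U\cap\Omega)$, since the global $\Omega$ need not be contained in the model $D$ of \eqref{subset1}. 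The paper resolves this with a two-step argument you don't have: prove the Ramadanov-type convergence for $\widetilde\Omega_j'$ using the existing lemma from \cite{BBMV1}; then for $\widetilde\Omega_j$, use Montel to extract a subsequential limit and identify it by writing $K_{\widetilde\Omega_j}(z)=\bigl(K_\Omega/K_{U\cap\Omega}\bigr)(S_j^{-1}(z))\cdot K_{\widetilde\Omega_j'}(z)$ and invoking the localization of the Bergman kernel near a strictly pseudoconvex boundary point. Your proposed route via uniform plurisubharmonic peak/bumping functions plus Cauchy estimates is a plausible alternative, but it is heavier and you would still need to address the non-containment of the full scaled domains explicitly; the paper's localization argument is what lets one sidestep this entirely.
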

Parts (b) and (c) of Theorem~\ref{bdy-n} can be viewed as analogues of Graham's result \cite{Gr} for the Kobayashi and Carath\' {e}odory metrics. Furthermore, using \cite[Theorem~1.17]{GK} and Theorem~\ref{bdy-n}~(e) implies that if $\Omega_1,\Omega_2 \subset \mathbb{C}^n$ are $C^2$-smoothly bounded strictly pseudoconvex domains equipped with the metrics $ds^2_{\tilde b,\Omega_1}$ and $ds^2_{\tilde b, \Omega_2}$, then every isometry 
\[F: \Big(\Omega_1, ds^2_{\tilde b,\Omega_1}\Big) \to \Big(\Omega_2, ds^2_{\tilde b, \Omega_2}\Big)\]
is either holomorphic or conjugate holomorphic. Moreover, for $n=1$, the holomorphic sectional curvature and the Ricci curvature coincide with the Gaussian curvature, which does not depend on $X$, and hence from Theorem~\ref{bdy-n}~(e), we observe that as $z\to \pa\Om$, 
\[R_{\tilde b, \Omega}(z,X)\to -\frac{1}{3}.\]
This particular case aligns with Theorem 1.4 (IV) of \cite{bk22}.
\begin{theorem}\label{loc}
Let $\Omega \subset \mathbb{C}^n$ be a $C^2$-smoothly bounded strictly pseudoconvex domain, and let $p^0\in \partial \Omega$. Then, for a sufficiently small neighbourhood $U$ of $p^0$, we have
\begin{itemize}
\item[(a)] $\lim_{z \to p^0} \dfrac{g_{\tilde b,U\cap \Omega}(z)}{g_{\tilde b,\Omega}(z)}=1,$
\item[(b)] $\lim_{z \to p^0} \dfrac{\beta_{\tilde b,U\cap \Omega}(z)}{\beta_{\tilde b,\Omega}(z)}=1$,
\item[(c)]  $\lim_{z \to p^0}\dfrac{ds_{\tilde b,U \cap \Omega}(z,X)}{ds_{\tilde b,\Omega}(z,X)}=1,$ 
\item[(d)] $\lim_{z \to p^0} \dfrac{2-R_{\tilde b,U \cap \Omega}(z,X)}{2-R_{\tilde b,\Omega}(z,X)}=1$,
\item[(e)] $ \lim_{z \to p^0} \dfrac{n+1-\Ric_{\tilde b, U \cap \Omega}(z,X)}{n+1-\Ric_{\tilde b,\Omega}(z,X)}=1$,
\end{itemize}
uniformly on $\{ \Vert X \Vert = 1\}$.
\end{theorem}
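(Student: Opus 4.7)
The plan is to deploy the same scaling apparatus used to prove Theorem~\ref{bdy-n}, but to run it in parallel on $\Omega$ and on $U \cap \Omega$. Because $U$ is a fixed neighbourhood of $p^0$, the local geometry near $p^0$ is identical for the two domains, so a single family of non-isotropic affine dilations normalises both of them to the same model. The ratios appearing in (a)--(e) then tend to $1$ because their numerator and denominator converge to the same limit on the model.

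\textbf{Main steps.} Fix any sequence $z_k \to p^0$ in $U \cap \Omega$ and let $T_k \colon \mathbb{C}^n \to \mathbb{C}^n$ be the standard non-isotropic affine scaling maps (a translation, a unitary rotation, and an anisotropic dilation with rates $\delta_\Omega(z_k)^{-1}$ normally and $\delta_\Omega(z_k)^{-1/2}$ tangentially) that send $z_k$ to a fixed interior base point and straighten $\partial\Omega$ near $z_k$. Since $T_k$ depends only on the local geometry of $\partial\Omega$ near $z_k$, the same map serves both domains. Set $\Omega_k = T_k(\Omega)$ and $\widetilde\Omega_k = T_k(U\cap\Omega)$. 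Because the dilation rates blow up while $U$ is fixed, every compact set $K \subset \widehat\Omega := \{w\in\mathbb{C}^n : 2\Re w_n + |w'|^2 < 0\}$ eventually lies in $T_k(U)$; consequently $\Omega_k \cap K = \widetilde\Omega_k \cap K$ for all large $k$, and both sequences converge normally to $\widehat\Omega$.

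\textbf{Transfer and conclusion.} By the Ramadanov-type convergence theorem for Bergman kernels on strictly pseudoconvex domains, $K_{\Omega_k}(\cdot,\cdot) \to K_{\widehat\Omega}(\cdot,\cdot)$ and $K_{\widetilde\Omega_k}(\cdot,\cdot) \to K_{\widehat\Omega}(\cdot,\cdot)$ uniformly on compacta of $\widehat\Omega\times\widehat\Omega$. Sesquiholomorphicity and Cauchy's estimate promote this to convergence of all mixed derivatives, and hence of the matrix $G_{\tilde b}$, its determinant $g_{\tilde b}$, the quadratic form $ds_{\tilde b}^2(\cdot,X)$, the canonical invariant $\beta_{\tilde b}$, and the holomorphic sectional and Ricci curvatures, computed on the rescaled domains. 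Now apply the biholomorphic invariance~\eqref{tr-kf} and its analogues for the other invariants: the $J_\mathbb{C} T_k$-factor pulled out on each side of a given ratio is identical for $\Omega_k$ as for $\widetilde\Omega_k$, so it cancels, leaving the ratio equal to the corresponding ratio on the rescaled domains evaluated at $T_k(z_k)$. Each such ratio tends to $1$. The shifts $2-R$ and $n+1-\Ric$ in (d) and (e) are precisely what keeps the denominators bounded away from zero in the limit (by Theorem~\ref{bdy-n}(e)--(f), the limits equal $2 + 2/((n+1)(n+2))$ and $n + 1 + 1/(n+2)$), and uniformity on $\{\|X\|=1\}$ follows from the uniform entrywise convergence of the matrix of $G_{\tilde b}$ and of the curvature tensor.

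\textbf{Main obstacle.} The delicate point is the convergence of higher-order derivatives of the rescaled Bergman kernels. Pointwise convergence $K_{\Omega_k} \to K_{\widehat\Omega}$ is classical and follows from Hörmander's $L^2$ solvability of $\bar\partial$ together with a reproduction-plus-restriction comparison. Promoting it to convergence of derivatives of the order required by the curvature computations in (d) and (e) demands a uniform interior lower bound on $K_{\Omega_k}$ at the base point, so that the logarithms behave well under Cauchy estimates. Once this is in hand, parts (a)--(e) are formal consequences of the cancellation described above.
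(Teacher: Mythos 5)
Your proposal follows essentially the same route as the paper: rescale $\Omega$ and $U\cap\Omega$ by one and the same Pinchuk-type scaling sequence based at $p^0$, establish that both rescaled Bergman kernels (and hence all derived quantities of the Kobayashi--Fuks metric) converge to those of the Siegel model, and exploit biholomorphic invariance so that the Jacobian factors cancel in each ratio, leaving $1$ in the limit. One point you gloss over: the standard Ramadanov-type theorem applies readily to $T_k(U\cap\Omega)$ because the rescaled pieces sit inside a fixed enlargement of the ball after the Cayley transform, but for the full domain $T_k(\Omega)$ that enclosure can fail, and the paper needs a separate argument (Lemma~\ref{ram}: Montel's theorem, a locally uniform bound by $K_{\mathbb B^n(z,c_1)}$, and localization of the Bergman kernel near a strictly pseudoconvex boundary point to identify the limit) to get the convergence you invoke. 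That step is the genuine technical content behind your appeal to ``the Ramadanov-type convergence theorem'' for $\Omega_k$; once it is supplied, the cancellation argument and the uniformity on $\{\|X\|=1\}$ go through exactly as you describe.
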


\noindent\textbf{Standing Assumptions:} Let \( z = ('z, z_n) \in \mathbb{C}^n \), where \( 'z = (z_1, \ldots, z_{n-1}) \). Throughout this article, we assume that \( \Omega  \) is a $C^2$-smoothly bounded strictly pseudoconvex domain. For any linear map \( L: \mathbb{C}^n \to \mathbb{C}^n \), its matrix representation is denoted by \( \mathsf{L} \), and \( \mathsf{I}_n \), the identity matrix, represents the identity map $i_n:\mathbb C^n\to \mathbb C^n$. Finally, let $\mathbb B^n(z, R)$ denote the ball centred at $z\in\mathbb C^n$ with radius $R>0$.
\medskip

\noindent \textbf{Acknowledgments:} The author would like to thank D. Borah for his constant encouragement during the project.

\section{The Ramadanov type theorem}
 The proof of our results is based on the stability of the Bergman kernel under Pinchuk's scaling sequence. We begin by recalling the change of coordinates associated with this. 

\subsection{Change of coordinates}
 Let $p^0\in \partial \Omega$ be a boundary point. There exists a $C^2$-smooth local defining function $r: U\to\mathbb R$ for $\Omega$, defined in a neighbourhood $U$ of a boundary point $p^0\in \partial \Omega$. We set \( \nabla_z r = \left(\partial r/\partial z_1, \ldots,\partial r/\partial z_n\right) \) and  \( \nabla_{\overline{z}} r = \overline{\nabla_z r} \). Therefore, the gradient of $r$ can be expressed as \( \nabla r = 2 \nabla_{\overline{z}} r \). Without loss of generality, let
\begin{equation}\label{normal-Re-z_n}
\nabla_{\overline z} r(p^0)=({'0},1) \text{ and } \frac{\partial r}{\partial z_n}(z)\neq 0 \text{ for all } z\in U.
\end{equation}
The following Lemma from \cite{Pin} provides the holomorphic change of coordinates near strictly pseudoconvex boundary points.

\begin{lemma}\label{change-coordinates}
There exist a family of automorphisms $\Psi_{p}:\mathbb{C}^n\to \mathbb{C}^n$ that depend continuously on $p \in \partial \Omega \cap U$, which satisfy the following:
\begin{itemize}
\item[(a)] The mapping $\Psi_{p}$ satisfies $\Psi_{p^0}=i_n$ and $\Psi_{p}(p)=0$.
\item[(b)] The local defining function $r_{p}=r\circ \Psi_{p}^{-1}$ of $\Omega_{p}=\Psi_{p}(\Omega)$ near the origin has the form 
\[
r_{p}(z)=2\text{Re}\big(z_n+G_{p}(z)\big)+L_{p}(z)+o(|z^2|),
\]
 where
\[
G_{p}(z)=\sum\limits_{\mu,\nu=1}^n a_{\mu \nu}(p)z_{\mu}z_{\nu}\quad \text{and} \quad L_{p}(z)=\sum\limits_{\mu,\nu=1}^n a_{\mu \overline{\nu}}(p)z_{\mu}\overline{z}_{\nu}.
\]
Here, the functions $a_{\mu \nu}(p), a_{\mu \overline \nu}(p)$ depend continuously on $p$ with $G_{p}('z,0)\equiv 0$ and $L_{p}('z,0)\equiv |'z|^2$.
\item[(c)] The mapping $\Psi_{p}$ maps the real normal $\eta_{p}=\{z=p+2t \nabla_{\overline z}r(p): t \in \mathbb{R}\}$ to $\partial \Omega$ at $p$ into the real normal $\{'z=y_n=0\}$ to $\partial \Omega_{p}$ at the origin.
\end{itemize}
\end{lemma}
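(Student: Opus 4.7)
The plan is to construct $\Psi_p$ as a composition of three biholomorphisms $\Psi_p = R_p \circ S_p \circ T_p$, each depending continuously on $p \in \partial\Omega \cap U$ and reducing to the identity at $p = p^0$. To secure $\Psi_{p^0} = i_n$, I would first replace $r$, once and for all, by an equivalent local defining function that is already in normal form at $p^0$: in addition to the given $\nabla_{\bar z} r(p^0) = ('0, 1)$, one may further arrange that the pure tangential holomorphic quadratic part of $r$ at $p^0$ vanishes and that the tangential Levi form at $p^0$ equals $|'z|^2$. This preliminary reduction is a one-time $\mathbb C$-linear change of coordinates and does not affect the subsequent analysis.

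The first map $T_p(z) = \mathsf{A}_p(z - p)$ is complex affine, where the $\mathbb C$-linear isomorphism $\mathsf{A}_p$ is chosen so that $\nabla_{\bar z}(r \circ T_p^{-1})(0) = ('0, 1)$. Since $\partial r/\partial z_n$ never vanishes on $U$, a continuous Householder-type selection lifts the varying vector $\nabla_{\bar z} r(p)$ to the $e_n$-direction, producing $\mathsf{A}_p$ continuous in $p$ with $\mathsf{A}_{p^0} = \mathsf{I}_n$. Pulled back by $T_p^{-1}$, the defining function has the Taylor expansion
\[
r \circ T_p^{-1}(z) = 2\text{Re}(z_n) + \text{Re}\bigl(Q_p(z)\bigr) + H_p(z) + o(|z|^2),
\]
where $Q_p$ is a pure holomorphic quadratic form and $H_p$ is the Hermitian form at the origin, both with coefficients continuous in $p$.

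The second map is the polynomial automorphism $S_p('z, z_n) = \bigl('z,\, z_n + \tfrac{1}{2} Q_p^{\mathrm{tan}}('z)\bigr)$, where $Q_p^{\mathrm{tan}}$ denotes the restriction of $Q_p$ to the $'z$-variables. Composition by $S_p^{-1}$ absorbs $\text{Re}(Q_p^{\mathrm{tan}})$ into $2\text{Re}(z_n)$, so the remaining holomorphic quadratic term $G_p$ consists only of monomials containing at least one factor of $z_n$, which is exactly the condition $G_p('z, 0) \equiv 0$. Because $S_p$ is tangent to the identity in the $'z$-directions, the Hermitian form $H_p$ is preserved to leading order. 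The third map $R_p('z, z_n) = (\mathsf{M}_p{'z}, z_n)$ is linear in $'z$ and is chosen, via the positive definiteness of the tangential Levi form (strict pseudoconvexity), so that the resulting tangential Hermitian form becomes exactly $|'z|^2$; such $\mathsf{M}_p$ can be selected continuously in $p$ with $\mathsf{M}_{p^0} = \mathsf{I}_{n-1}$ by the preliminary normalization.

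For (c), the real normal $\eta_p$ is the line through $p$ in the direction $\nabla r(p) = 2\nabla_{\bar z} r(p)$; the affine piece $T_p$ sends it to the line through the origin in the direction $2e_n$, and both $S_p$ and $R_p$ fix every point with $'z = 0$, so $\Psi_p(\eta_p) = \{'z = y_n = 0\}$. The main obstacle is technical rather than conceptual: one must ensure the continuous global dependence of $\mathsf{A}_p$ and $\mathsf{M}_p$ on $p$, which is handled by shrinking $U$ around $p^0$ so that all the linear selections remain close to the identity and continuous branches are readily available.
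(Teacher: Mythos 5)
Your decomposition $\Psi_p = R_p \circ S_p \circ T_p$ (affine normalization, polynomial shear, tangential linear stretch) is exactly Pinchuk's construction used in the paper, $\Psi_p = \Phi_3^p \circ \Phi_2^p \circ \Phi_1^p$, so the overall approach is the same; you also correctly observe that obtaining $\Psi_{p^0}=i_n$ requires $r$ to already be in normal form at $p^0$, something the paper leaves implicit in its choice of coordinates. Two points deserve sharpening, though.

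First, the preliminary reduction cannot be achieved by a single $\mathbb{C}$-linear change of coordinates: a linear map can normalize $\nabla_{\bar z} r(p^0)$ and diagonalize the Levi form, but annihilating the pure tangential holomorphic quadratic at $p^0$ (so that $G_{p^0}({}'z,0)\equiv 0$, hence $\Phi_2^{p^0}=i_n$) requires a degree-two polynomial shear.

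Second, and more substantively, the condition you impose on $\mathsf{A}_p$ --- that $\nabla_{\bar z}(r\circ T_p^{-1})(0)=({}'0,1)$ --- only constrains one row of $\mathsf{A}_p$ and does not by itself force $\mathsf{A}_p\nabla_{\bar z}r(p)$ to lie along $e_n$; that is precisely what is needed for claim (c), that $T_p$ carries $\eta_p$ into the $\Re z_n$-axis. The paper handles this by taking the concrete Pinchuk matrix $\mathsf{P}_p$ of \eqref{P-matrix}, whose first $n-1$ rows are engineered so that $\mathsf{P}_p\,\nabla_{\bar z}r(p)=({}'0,|\nabla_{\bar z}r(p)|^2)$ and whose entries are polynomial in the first derivatives of $r$, so $\mathsf{P}_p\to\mathsf{I}_n$ automatically as $p\to p^0$. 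If you instead want a unitary ``Householder-type'' $\mathsf{A}_p$, you do get both properties simultaneously, but then continuity through $\mathsf{A}_{p^0}=\mathsf{I}_n$ needs care: complex Householder reflections involve a choice of phase and have determinant $-1$, so they do not degenerate continuously to the identity. You would have to use a rotation-type interpolation instead, or simply adopt the explicit $\mathsf{P}_p$. With these two points fixed, your argument matches the paper's.
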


The definition of $\Psi_p$ and its derivative plays an important role in examining the boundary asymptotics of the invariant objects, so we briefly recall its construction. For each $p\in\partial\Omega\cap U$, $\Psi_{p}(z)=\Phi_{3 }^{p}\circ \Phi_{2}^{p}\circ \Phi_{1 }^{p}(z)$, where each $\Phi_{i}^{p}:\mathbb{C}^n \to \mathbb{C}^n$ is an automorphism. We fix $p\in\partial\Omega\cap U$. Recall that the Taylor expansion of the defining function $r$ near $p$ has the form 
\begin{multline}\label{rho-0}
r(z)=2\Re\left(\sum_{\mu=1}^n \frac{\partial r(p)}{\partial z_{\mu}}(z_{\mu}-p_{\mu}) + \frac{1}{2} \sum_{\mu,\nu=1}^n \frac{\partial^2 r(p)}{\partial z_{\mu} \partial z_{\nu}} (z_{\mu}-p_{\mu})(z_{\nu}-p_{\nu}) \right)\\+\sum_{\mu,\nu=1}^n \frac{\partial^2 r(p)}{\partial z_{\mu}\partial \overline z_{\nu}} (z_{\mu}-p_{\mu})(\overline z_{\nu}-\overline p_{\nu}) +o(\vert z-p\vert^2).
\end{multline}

  The map $w=\Phi_{1}^{p}(z)$ is an affine transformation defined as $\Phi_{1}^{p}(z)=P_p(z-p)$, where  $P_{p}:\mathbb C^n\to \mathbb C^n$ is the linear map whose matrix is
\begin{align}\label{P-matrix}
\mathsf{P}_{p}=\begin{pmatrix}
\frac{\partial r(p)}{\partial \overline z_n} & 0 & \cdots & 0 & -\frac{\partial r(p)}{\partial \overline z_1}\\
0 & \frac{\partial r(p)}{\partial \overline z_n} & \cdots & 0 & -\frac{\partial r(p)}{\partial \overline z_2}\\
\vdots & \vdots & \cdots &\vdots & \vdots\\
0 & 0 & \cdots & \frac{\partial r(p)}{\partial \overline z_n} & -\frac{\partial r(p)}{\partial \overline z_{n-1}}\\
\frac{\partial r(p)}{\partial z_1} & \frac{\partial r(p)}{\partial z_2} & \cdots & \frac{\partial r(p)}{\partial z_{n-1}} & \frac{\partial r(p)}{\partial z_n}
\end{pmatrix},
\end{align}
i.e.,
    \begin{equation}\label{phi_1-defn}
\begin{aligned}
w_\mu & =\frac{\partial r(p)}{\partial \overline z_n}(z_\mu-p_\mu)-\frac{\partial r(p)}{\partial \overline z_\mu}(z_n-p_n)\quad \text{for}\quad 1\leq \mu\leq n-1,\\
w_n & =\sum\limits_{\mu=1}^n \frac{\partial r(p)}{\partial z_{\mu}}(z_{\mu}-p_{\mu}).
\end{aligned}
\end{equation}
By \eqref{normal-Re-z_n}, $\Phi_{1}^{p}$ is nonsingular and $\Phi_{1}^{p}(p)=0$. Also,
\begin{equation}\label{phi1-n_z}
\Phi_{1}^{p} \big(p+t\nabla_{\overline z}r(p)\big) =t\mathsf{P}  _p\nabla_{\overline z}r(p)=\big({'0}, t\vert \nabla_{\overline z}r(p)\vert^2\big).
\end{equation}
Thus, $\Phi_{1}^{p}$ maps $\eta_p$ to $\Re w_n$, which is the real normal to the boundary of $\Omega_{p}^{1}=\Phi_{1}^{p}(\Omega)$. This can be seen from the Taylor expansion of the defining function  $r_{1}^{p}=r\circ( \Phi_{1 }^{p})^{-1}$ of $\Omega_{p}^1$ near the origin. Indeed, by expressing (\ref{rho-0}) in $w$ coordinates and then replacing $w$ by $z$, we have

\begin{equation}\label{r-1}
r_{1}^p(z)=2\text{Re}\left(z_n+G_p^1(z)\right)+L_{p}^1(z)+o(\vert z \vert^2),
\end{equation}
where
\begin{equation}\label{a1-b1}
\begin{aligned}
G_p^1(z) &=\sum_{\mu,\nu=1}^{n} a^1_{\mu \nu}(p) z_{\mu} z_{\nu}, \quad \begin{pmatrix}a^1_{\mu \nu}(p)\end{pmatrix}=\frac{1}{2}(\mathsf{P}_{p}^{-1})^t \begin{pmatrix} \frac{\partial^2 r(p)}{\partial z_{\mu} \partial z_{\nu}} \end{pmatrix}\mathsf{P}_{p}^{-1},\\
L_p^{1}(z) & =\sum_{\mu,\nu=1}^n b^1_{\mu \overline \nu}(p) z_{\mu} \overline z_{\nu}, \quad 
\begin{pmatrix}b^1_{\mu\overline \nu}(p)\end{pmatrix}=(\mathsf{P}_{p}^{-1})^{*}\begin{pmatrix} \frac{\partial^2 r(p)}{\partial z_{\mu}\partial \overline z_{\nu}} \end{pmatrix}\mathsf{P}_{p}^{-1}.
\end{aligned}
\end{equation}
From (\ref{phi_1-defn}), we have as $p\to p^0,\;\Phi_{1}^{p}(z)\to\Phi_{1}^{p^0}(z)$ uniformly on compact subsets of $\mathbb C^n$. Furthermore, $(\Phi_{1}^{p})'(z)\to(\Phi_{1}^{p^0})'(z)$ in the operator norm, and uniformly in $z\in\mathbb{C}^n$, since $(\Phi_{1}^{p})'(z)$ is identically equal to $\mathsf P_p$.

Next, the map $w=\Phi_{2}^{p}(z)$ is a polynomial automorphism, defined as 
\begin{equation}\label{phi_2-defn}
w=\left('z, z_n+\sum\limits_{\mu,\nu=1}^{n-1}a_{\mu\nu}^1(p)z_{\mu}z_{\nu}\right).
\end{equation}
Clearly, $\Phi_{2}^{p}$ fixes the points in $\Re z_n$-axis. Again, we relabel the new coordinates $w$ by $z$, and obtain the local defining function $ r \circ (\Phi_{1}^{p})^{-1} \circ (\Phi_{2}^{p})^{-1} $ of the domain $\Omega_{p}^2= \Phi_{2}^{p}\circ \Phi_{1}^{p}(\Omega)$ near the origin has the form \eqref{r-1} with $
a_{\mu\nu}^1(p)=0$ for $1\leq \mu, \nu \leq n-1$. From (\ref{a1-b1}), $a_{\mu\nu}^1(p)$ depends continuously on $p$, which yields as \( p \to p^0 \), \( \Phi_{2}^{p}(z) \to \Phi_{2}^{p^0}(z) \) uniformly on compact subsets of \( \mathbb{C}^n \). Furthermore,
\begin{equation}\label{der-phi2}
(\Phi_2^{p})'(z)= \begin{pmatrix}\mathsf{I}_{n-1} & 0\\
\begin{pmatrix}\displaystyle \sum_{\mu=1}^{n-1} a^1_{\mu \ga}(p) z_{\mu} + \displaystyle \sum_{\nu=1}^{n-1} a^1_{\ga\nu}(p) z_{\nu}\end{pmatrix}_{\ga=1, \ldots, n-1}& 1 \end{pmatrix}.
\end{equation}
Therefore, as \( p \to p^0 \), \( (\Phi_{2}^{p})'(z) \to (\Phi_{2}^{p^0})'(z) \) in operator norm as well as uniformly on compact subsets of \( \mathbb{C}^n \).

Lastly, the map $\Phi_{3}^{p}$ is chosen so that the Hermitian form $L_{p}^1(z)$ satisfies $L_{p}^1('z,0)=\vert 'z\vert^2$. Since $\Omega$ is strictly pseudoconvex, and in the current coordinates, the complex tangent space to $\partial \Omega$ at $p$ is $H_{p}(\partial\Omega)=\{z_n=0\}$. We obtain, the form $L_{p}^1('z,0)$ is strictly positive definite. Therefore, there exists a unitary map $U_{p}:\mathbb C^{n-1}\rightarrow \mathbb C^{n-1}$ such that $L_{p}^1(U_{p}('z),0)=\sum_{i=1}^n\la_i(p)|z_i|^2$, where $\la_{i}(p)>0$ denote the eigenvalues of $L_{p}^1('z,0)$. Then, using the stretching map $R_{p}=\Big(z_1/\sqrt {\la_1(p)},\ldots,z_{n-1}/\sqrt{\la_{n-1}(p)}\Big),$
we define a linear map  \[A_{p}=R_{p}\circ U_{p},\]
that satisfies $L_{p}^1(A_{p}('z),0)=|'z|^2$. Observe that both $U_{p}$ and $R_{p}$, and hence $A_{p}$, can be chosen to depend continuously on $p$, with its derivative at any point also varying continuously with $p$. Let $w=\Phi_{3}^{p}(z)$ be defined as
\begin{equation}\label{phi_3-defn}
    w=(A_{p}('z),z_n),
\end{equation}
and relabel $w$ with $z$. Then, the local defining function $r_{p}=r\circ \Psi_{p}^{-1}$ of the domain $\Omega_{p}=\Psi_{p}(\Omega)=\Phi_3^p\circ\Phi_2^p\circ \Phi_1^p (\Omega)$ near the origin has the Taylor series expansion as described in (b). From the definition, observe that
$\Phi_3^p$ fixes the points in the $\Re z_n$-axis. The linear map $\Phi_3^p$ can be chosen so that both the map and its derivative vary continuously with $p$. Therefore, it follows from the construction of the maps $\Phi_{i}^{p},~i=1,2,3$ that $\Psi_{p}$ satisfy (a) and (c) and depend continuously on $p$.

\subsection{Scaling of $\Omega$}
Using the strict pseudoconvexity of $\partial\Omega$, there exist local holomorphic coordinates $z_1,\ldots, z_n$ on a sufficiently small neighbourhood $U$ such that $p^0=0\in\partial\Omega$, and 
\begin{equation}\label{normal form}
r(z)=2\Re z_n+|'z|^2+o\big(\Im z_n,|'z|^2\big), \quad z \in U,
\end{equation}
with a constant $0<c_0<1$, so that
\begin{align}\label{subset1}
U \cap \Omega\subset D:=\big\{z \in \mathbb C^n: 2\Re z_n+c_0|'z|^2<0\big\}.
\end{align}
As specified, we will work in the above coordinates, with $U,c_0$ and $p^0=0$.

Without loss of generality, let $(\zeta^j)_{j\geq 1}$ be a sequence in $U\cap\Omega$ converging to $0$, and for each $j$, there exists a unique $p^j\in \partial \Omega \cap U$ such that $|\zeta^j-p^j|= \delta_{\Omega}(\zeta^j)=\delta_j$. Observe that as $j\to \infty$, $p^j\to 0$ and $\delta_j\to 0$. Let $\Phi_{i}^{p^j}=\Phi_{i}^{j},\;P_j=P_{p^j},\;A_j=A_{p^j},\;\Psi_j=\Psi_{p^j},\;\Omega_j=\Omega_{p^j}$, $r_j=r_{p^j}$, $G_j=G_{p^j}$, and $L_j=L_{p^j}$. By Lemma \ref{change-coordinates}, near $0$, we have
\begin{align*}
r_j(z)= 2\text{Re}\,\big(z_n+G_j(z)\big)+L_j(z)+o\big(|z|^2\big).
\end{align*}
Furthermore, due to the strict pseudoconvexity of 
$\partial \Omega$ near $0$, shrinking $U$ if necessary and reducing $c_0$ in \eqref{subset1} if necessary, we obtain
\begin{align}\label{subset2}
\Psi_j(U\cap \Omega) \subset D
\end{align}
 for all $j$ large. Now, set $ \Omega_j'=\Psi_j(U\cap \Omega)$, $q^j=\Psi_j(\zeta^j)$ and  for all $j$ large, $\eta_j=\delta_{\Omega_j'}(q^j)=\delta_{\Omega_j}(q^j)$. From (\ref{phi1-n_z}) and the fact that $\Phi_2^j,~j=2,3$ fixes the points in $\Re z_n$-axis, it follows that
\begin{equation}\label{q-j}
   q^j=('0, -\delta_j|\nabla_{\overline z}r(p^j)|).
\end{equation}
where $\zeta^j=p^j-\delta_j\nabla_{\overline z}r(p^j)/|\nabla_{\overline z}r(p^j)|$.
Consequently, $\eta_j=\delta_j|\nabla_{\overline z}r(p^j)|,$ and hence by (\ref{normal-Re-z_n}),
\begin{equation}\label{eta-j/de-j}
    \frac{\eta_j}{\delta_j}=|\nabla_{\overline z}r(p^j)|\to |\nabla_{\overline z}r(0)|=1.
\end{equation}
Also, set $\mathsf Q_j=\Psi_j'(\zeta^j)$ and as $j\to\infty$, we have
\begin{equation}\label{conv-Q-j}
    \mathsf Q_j\to \mathsf I_{n}
\end{equation}
in the operator norm, since \begin{align*}
(\Phi_2^j)'\big(\Phi_1^j(\zeta^j)\big)=\mathsf{I}_n,
\end{align*}
and \[
(\Phi^j_3)'=\begin{bmatrix} \mathsf A_j & 0\\ 0 &1\end{bmatrix},
\] where $\mathsf A_j$ converges to $\mathsf{I}_{n-1}$ in the operator norm.

 Next, consider the anisotropic dilation map $T_j : \mathbb{C}^n \to \mathbb{C}^n$ defined by
\begin{equation}\label{defn-T_j}
T_j(z)=\left(\frac{z_1}{\sqrt{\eta_j}},\ldots,\frac{z_{n-1}}{\sqrt{\eta_j}},\frac{z_n}{\eta_j}\right),
\end{equation}
Note that 
\begin{equation}\label{det-T-matrix}
    \det\mathsf  T_j=\eta_j^{\frac{-(n+1)}{2}}.
\end{equation}
Let $\widetilde \Omega_j'=T_j(\Omega_j')$ and $\widetilde{\Omega}_j=T_j(\Omega_j)$. We call $S_j=T_j\circ \Psi_j$, the scaling maps. Using (\ref{q-j}) in (\ref{defn-T_j}), 
\[S_j(\zeta^j)=T_j(q^j)=('0,-1)=b^*\in \widetilde \Omega_j'\subset\widetilde \Omega_j.\]
The defining function for $\widetilde \Omega_j'$ and $\widetilde \Omega_j$ near $0$, is given by
\[
\tilde r_j(z)=\frac{1}{\eta_j} r_j\Big(T_j^{-1}(z)\Big)=2\text{Re}\Bigg(z_n+\frac{1}{\eta_j}G_j\Big(T_j^{-1}(z)\Big)\Bigg)+\frac{1}{\eta_j}L_j\Big(T_j^{-1}(z)\Big)+o\Big(\eta_j^{1/2}|z|^2\Big).
\]
As $G_j(z)$ and $L_j(z)$ satisfy Lemma~\ref{change-coordinates} (b), it follows that
\[
\lim_{j\to \infty}\frac{1}{\eta_j}G_j(T_j^{-1}z)=0\quad \text{and}\quad \lim_{j\to \infty}\frac{1}{\eta_j}L_j(T_j^{-1}z)=|'z|^2,
\]
in $C^2$-topology on compact subsets of $\mathbb{C}^n$. Also, $o(\eta_j^{1/2}\vert z\vert ^2)\to 0$ as $j \to \infty$ in $C^2$-topology on compact subsets of $\mathbb C^n$. Hence, the sequence of defining functions $\tilde r_j(z)$ converge to 
\[
r_{\infty}(z)=2\text{Re}\,z_n+|'z|^2
\]
 in $C^2$-topology on compact subsets of $\mathbb C^n$. Therefore, the sequence of domains $\widetilde \Omega_j'$ and $\widetilde \Omega_j$ converge in the local Hausdorff sense to the Siegel upper half-space
\[
\Omega_{\infty}=\big\{z\in \mathbb C^n:2\text{Re}\,z_n+|'z|^2<0\big\}.
\]
Moreover, by (\ref{subset2}), it can be seen that for each $\epsilon>0$, 
\begin{equation}\label{subset3}
    H(\widetilde \Omega_j')\subset (1+\epsilon)H(\Omega_\infty),
\end{equation}
for all $j$ large, where $H(z)=\left(\frac{\sqrt{2}\,{'z}}{z_n-1}, \frac{z_n+1}{z_n-1}\right)$ is a biholomorphism of the domain 
\[
\Omega_H=\mathbb{C}^n \setminus \{z: z_n=1\},
\]
onto itself with $H(\Omega_\infty)=\mathbb{B}^n$ and $H^{-1}=H$. Furthermore,
\begin{equation}\label{der-Phi}
H(b^*)=0,~ H'(b^*)=-\diag\{1/\sqrt 2,\ldots,1/\sqrt 2, 1/2\},~\det H'(b^*)=(-1)^n2^{\frac{-(n+1)}{2}}.
\end{equation}
We refer to the proof of \cite[Lemma 5.3]{BV-ns} for further details of (\ref{subset3}). In this context, the stability result for $\widetilde \Omega_j'$ follows from a Ramadanov-type result stated in \cite[Lemma 2.1]{BBMV1}.
\begin{lemma}\label{local-stab}
    The sequence of Bergman kernels $K_{\widetilde \Omega_j'}(z, w)$ converges to $K_{\Omega_\infty}(z, w)$ uniformly on compact subsets of $\Omega_\infty\times \Omega_\infty$, along with all the derivatives. 
\end{lemma}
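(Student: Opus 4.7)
The strategy is to transport the problem to a uniformly bounded setting through the biholomorphism $H$ introduced just before the lemma. Since $H(\Omega_\infty) = \mathbb{B}^n$ and, by \eqref{subset3}, we have $H(\widetilde{\Omega}_j') \subset (1+\epsilon)\mathbb{B}^n$ for all $j$ large, the sequence $\{H(\widetilde{\Omega}_j')\}$ is a uniformly bounded family of domains. Moreover, because the Hausdorff convergence $\widetilde{\Omega}_j' \to \Omega_\infty$ already established in the preceding discussion is local in nature, and $H$ is a biholomorphism away from the hyperplane $\{z_n = 1\}$ (which is disjoint from $\overline{\Omega_\infty}$), the transported domains satisfy $H(\widetilde{\Omega}_j') \to \mathbb{B}^n$ in the local Hausdorff sense. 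This is precisely the regime in which the Ramadanov-type stability result \cite[Lemma 2.1]{BBMV1} applies, yielding
\[
K_{H(\widetilde{\Omega}_j')}(z,w) \longrightarrow K_{\mathbb{B}^n}(z,w)
\]
uniformly on compact subsets of $\mathbb{B}^n \times \mathbb{B}^n$.

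Next, I would transfer the convergence back to $\widetilde{\Omega}_j'$ via the biholomorphic transformation rule for the Bergman kernel: for $z,w \in \widetilde{\Omega}_j'$,
\[
K_{\widetilde{\Omega}_j'}(z,w) = K_{H(\widetilde{\Omega}_j')}\bigl(H(z),H(w)\bigr)\,\det H'(z)\,\overline{\det H'(w)},
\]
and similarly $K_{\Omega_\infty}(z,w) = K_{\mathbb{B}^n}(H(z),H(w)) \det H'(z)\,\overline{\det H'(w)}$. Since $H$ and $\det H'$ are holomorphic and independent of $j$, and since $H$ maps compact subsets of $\Omega_\infty$ to compact subsets of $\mathbb{B}^n$, the uniform kernel convergence just obtained pulls back to uniform convergence $K_{\widetilde{\Omega}_j'}(z,w) \to K_{\Omega_\infty}(z,w)$ on compact subsets of $\Omega_\infty \times \Omega_\infty$.

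Finally, the convergence of all derivatives is a routine consequence of the uniform convergence of the kernels themselves. Each $K_{\widetilde{\Omega}_j'}(z,w)$ is holomorphic in $z$ and antiholomorphic in $w$, so the Cauchy integral formula on a polydisc contained in a slightly larger compact subset of $\Omega_\infty \times \Omega_\infty$ converts uniform convergence of the kernels into uniform convergence of every partial derivative in $z$ and $\overline{w}$. The main obstacle, as I see it, is not the derivative upgrade nor the Cayley-type change of variables, but rather a careful verification that the hypotheses of \cite[Lemma 2.1]{BBMV1}---namely uniform boundedness together with local Hausdorff convergence to a sufficiently regular (smoothly bounded, pseudoconvex) limit---are genuinely met after applying $H$; once this is confirmed, the rest of the argument is largely formal.
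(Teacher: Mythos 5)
Your proposal is correct and follows essentially the same route as the paper, which (after establishing \eqref{subset3}) simply cites \cite[Lemma 2.1]{BBMV1}; you have merely unpacked the role of the Cayley map $H$, the biholomorphic transformation rule, and the Cauchy-estimate upgrade to derivatives, all of which are implicit in that citation. The concern you raise at the end is already discharged in the preceding text: local Hausdorff convergence of $\widetilde{\Omega}_j'$ to $\Omega_\infty$ together with \eqref{subset3} are exactly the hypotheses under which the cited Ramadanov-type lemma applies.
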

However, condition (\ref{subset3}) may not hold for $\widetilde \Omega_j$, since (\ref{subset2}) is not necessarily satisfied. Despite this, we have 
\begin{lemma}\label{ram}
    The sequence of Bergman kernels $K_{\widetilde\Omega_j}(z, w)$ converges to $K_{\Omega_\infty}(z, w)$ uniformly on compact subsets of $\Omega_\infty\times \Omega_\infty$, along with all the derivatives. 
\end{lemma}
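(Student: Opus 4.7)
The plan is to deduce Lemma~\ref{ram} from Lemma~\ref{local-stab} by combining the inclusion $\widetilde\Omega_j' \subset \widetilde\Omega_j$ (which follows from $U \cap \Omega \subset \Omega$ after applying $S_j = T_j \circ \Psi_j$) with the classical localization of the Bergman kernel at a strictly pseudoconvex boundary point. The monotonicity of the Bergman kernel on the diagonal immediately gives
\[
K_{\widetilde\Omega_j}(z,z) \leq K_{\widetilde\Omega_j'}(z,z),
\]
so by Lemma~\ref{local-stab} the diagonal values $K_{\widetilde\Omega_j}(z,z)$ are locally uniformly bounded on $\Omega_\infty$. Via the Cauchy--Schwarz inequality $|K(z,w)|^2 \leq K(z,z)K(w,w)$, the full kernels $K_{\widetilde\Omega_j}(z,w)$ form a locally uniformly bounded family on $\Omega_\infty \times \Omega_\infty$. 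Being holomorphic in $z$ and antiholomorphic in $w$, they form a normal family, and it suffices to identify every subsequential limit $F(z,w)$ with $K_{\Omega_\infty}(z,w)$.

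To pin down $F$ on the diagonal, I would apply the biholomorphic transformation law to $S_j$ and form the ratio
\[
\frac{K_{\widetilde\Omega_j}(z,z)}{K_{\widetilde\Omega_j'}(z,z)} = \frac{K_\Omega\bigl(S_j^{-1}(z),S_j^{-1}(z)\bigr)}{K_{U\cap\Omega}\bigl(S_j^{-1}(z),S_j^{-1}(z)\bigr)}.
\]
From \eqref{defn-T_j}, the fact that $\eta_j \to 0$, the continuous dependence of $\Psi_p^{-1}$ on $p$, and $p^j \to p^0 = 0$, one verifies that $S_j^{-1}(z) \to p^0$ for every fixed $z \in \Omega_\infty$. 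The classical localization principle for the Bergman kernel at a strictly pseudoconvex boundary point---used in a similar spirit in \cite{kgy96, bk22}---then gives that the right-hand side tends to $1$. Combined with Lemma~\ref{local-stab}, this forces $F(z,z) = K_{\Omega_\infty}(z,z)$ for every $z \in \Omega_\infty$. Since $(z,w) \mapsto F(z,\overline w)$ is holomorphic on $\Omega_\infty \times \overline{\Omega_\infty}$ and coincides with $(z,w) \mapsto K_{\Omega_\infty}(z,\overline w)$ on the maximally totally real submanifold $\{(z,\overline z)\}$, the identity principle forces $F \equiv K_{\Omega_\infty}$ throughout $\Omega_\infty \times \Omega_\infty$. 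Uniform convergence of all derivatives on compact subsets then follows from Cauchy's integral formula applied to the locally uniform convergence of the holomorphic functions $K_{\widetilde\Omega_j}$.

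The main obstacle in this scheme is supplying the diagonal localization $K_\Omega(\zeta,\zeta)/K_{U\cap\Omega}(\zeta,\zeta) \to 1$ as $\zeta \to p^0$. This is the only non-formal input; the rest is routine normal-family and identity-principle machinery. For $C^2$-smoothly bounded strictly pseudoconvex domains it follows from standard $L^2$-methods applied to H\"ormander's $\overline\partial$-estimates with a suitable plurisubharmonic weight, and it is precisely this step that requires some care at the $C^2$ regularity assumed in the paper.
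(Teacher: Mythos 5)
Your proposal is correct and follows essentially the same route as the paper: local uniform boundedness via a subdomain comparison and Cauchy--Schwarz, Montel's theorem, identification of the diagonal limit via the biholomorphic transformation rule under $S_j$ together with Bergman-kernel localization at the strictly pseudoconvex boundary point $p^0$, the identity principle off the diagonal, and passage to derivatives. The only cosmetic difference is that you bound $K_{\widetilde\Omega_j}$ by $K_{\widetilde\Omega_j'}$ and invoke Lemma~\ref{local-stab}, while the paper bounds it by $K_{\mathbb{B}^n(z,c_1)}$ for a small ball $\mathbb{B}^n(z,c_1)\Subset\widetilde\Omega_j'$; both yield the same local uniform bound.
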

To prove this, we require the following Lemma.
\begin{lemma}\label{phi-ij-inverse}
 For each $i=1,2,3$, the sequence $(\Phi_{i}^{j})^{-1}$ converges to $(\Phi_{i})^{-1}$ uniformly on compact subsets of $\mathbb{C}^n$, where 
    \[\Phi_1^{-1}(w)=i_n(w),~\Phi_2^{-1}(w)=\left( 'w, w_n-\sum\limits_{\mu,\nu=1}^{n-1}a_{\mu\nu}^1(0)w_{\mu}w_{\nu}\right),~\Phi_3^{-1}(w)=(i_{n-1}('w),w_n).\]
\end{lemma}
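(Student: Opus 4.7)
The strategy is to use the explicit formulas for $\Phi_i^{p}$ built in Section~2.1, all of which depend continuously on $p$, and then pass to the limit $p^j \to p^0 = 0$, where the normal form \eqref{normal form} collapses most of the relevant data to the identity. For $i=1$, the affine map $\Phi_1^j(z) = P_j(z - p^j)$ has inverse $(\Phi_1^j)^{-1}(w) = P_j^{-1}w + p^j$. The normal form \eqref{normal form} together with \eqref{normal-Re-z_n} forces $\mathsf{P}_{p^0} = \mathsf{I}_n$, and continuity of the entries of $\mathsf{P}_p$ displayed in \eqref{P-matrix} yields $\mathsf{P}_{p^j} \to \mathsf{I}_n$ in operator norm; inverting and using $p^j \to 0$ then gives $(\Phi_1^j)^{-1}(w) \to w = i_n(w)$ uniformly on compact subsets.

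For $i=2$, the polynomial map \eqref{phi_2-defn} has the explicit polynomial inverse
\[
(\Phi_2^j)^{-1}(w) = \left('w,\; w_n - \sum_{\mu,\nu=1}^{n-1} a^1_{\mu\nu}(p^j)\, w_\mu w_\nu\right).
\]
By \eqref{a1-b1} the coefficients $a^1_{\mu\nu}(p)$ depend continuously on $p$ (through the first and second partials of $r$), so $a^1_{\mu\nu}(p^j) \to a^1_{\mu\nu}(0)$; uniform convergence on compacta follows because these are polynomials of fixed degree~$2$ whose coefficients converge.

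For $i=3$, $\Phi_3^j(z) = (A_j('z), z_n)$ has inverse $(A_j^{-1}('w), w_n)$. At $p^0 = 0$ the Hermitian form $L_{p^0}^1('z,0)$ equals $|'z|^2$ (because $\mathsf{P}_{p^0} = \mathsf{I}_n$ and the Levi form of \eqref{normal form} at the origin is already the identity), so every eigenvalue $\lambda_i(p^0)$ equals $1$. We therefore fix the continuous branch from Section~2.1 so that $U_{p^0} = R_{p^0} = \mathsf{I}_{n-1}$, which gives $A_{p^0} = \mathsf{I}_{n-1}$. The continuous dependence of $A_p$ on $p$ then yields $A_j \to \mathsf{I}_{n-1}$ and hence $A_j^{-1} \to \mathsf{I}_{n-1}$ in operator norm, so $(\Phi_3^j)^{-1}(w) \to (i_{n-1}('w), w_n)$ uniformly on compact subsets.

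The only subtlety is the case $i=3$: since unitary diagonalization is not unique when Levi eigenvalues coincide, the identification $A_{p^0} = \mathsf{I}_{n-1}$ rests on choosing the continuous branch of $U_p$ constructed in Section~2.1 to be compatible with $U_{p^0} = \mathsf{I}_{n-1}$ at the base point. Once this bookkeeping is made, the proof reduces to direct substitution into the explicit inverse formulas together with the $C^2$-regularity of~$r$.
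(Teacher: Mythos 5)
Your proposal is correct and follows essentially the same route as the paper: compute the explicit inverses of $\Phi_1^j$, $\Phi_2^j$, $\Phi_3^j$ from \eqref{phi_1-defn}, \eqref{phi_2-defn}, \eqref{phi_3-defn} and pass to the limit using the continuous dependence of $\mathsf{P}_p$, $a^1_{\mu\nu}(p)$, and $\mathsf{A}_p$ on $p$. Your extra remark on the non-uniqueness of the unitary diagonalization at $p^0$ is a worthwhile clarification that the paper leaves implicit (it is forced by the normalization $\Psi_{p^0}=i_n$ in Lemma~\ref{change-coordinates}(a)), but it does not change the argument.
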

\begin{proof}
    From (\ref{phi_1-defn}), (\ref{phi_2-defn}), and (\ref{phi_3-defn}), we have
    \begin{itemize}
        \item $(\Phi_{1}^{j})^{-1}(w)=P_{j}^{-1}(w)+p^j,$
        \item $(\Phi_{2}^{j})^{-1}(w)=\left( 'w, w_n-\sum\limits_{\mu,\nu=1}^{n-1}a_{\mu\nu}^1(p^j)w_{\mu}w_{\nu}\right),$ and
        \item $(\Phi_{3}^{j})^{-1}(z)=\big(A_{j}^{-1}('w),w_n\big)$.
    \end{itemize}
    By the construction of \(\Phi_{i}^{j}\) for \(i=1,2,3\), it follows that the matrices \(\mathsf{P}_{j}\) and \(\mathsf{A}_{j}\) are invertible, with \(\mathsf{P}_{j}\to\mathsf{I}_{n}\) and  \(\mathsf{A}_{j}\to \mathsf{I}_{n-1}\) as \(p^j\to 0\in\partial\Omega\). Furthermore, \(a_{\mu\nu}(p^j) \to a_{\mu\nu}(0)\). This completes the proof.  
\end{proof}
\begin{proof}[Proof of Lemma~\ref{ram}]
   Let $z\in \Omega_\infty$. For sufficiently small $c_1>0$, $\mathbb B^n(z, c_1)\Subset\widetilde\Omega_\infty$. It follows that for all $j$ large,
 $\mathbb B^n(z, c_1)\Subset \widetilde{\Omega}_j'\subset \widetilde\Omega_j$. Consequently, for such $j$'s, 
    \begin{equation*}
        K_{\widetilde\Omega_j}(z)\leq K_{\mathbb B^n(z, c_1)}(z).
    \end{equation*}
   Therefore, for any $z, w\in \mathbb B^n(z, c_1/2)$,
    \begin{align*}
        \Big|K_{\widetilde\Omega_j}(z, w)\Big|\leq \sqrt{K_{\widetilde\Omega_j}(z)}\sqrt{ K_{\widetilde\Omega_j}(w)}\leq \max_{z\in \overline{ \mathbb{B}^n(z, c_1/2)}}{K_{\mathbb B^n(z, c_1)}(z)}
    \end{align*}
     for all $j$ large. Thus, $\Big(K_{\widetilde\Omega_j}(z, w)\Big)$ is a locally uniformly bounded sequence on $\Omega_\infty\times \Omega_\infty$. By Montel's theorem, there exists a subsequence converging locally uniformly to a function, say $K_{\infty}(z, w)$ on $\Omega_\infty\times \Omega_\infty$, which is holomorphic in $z$ and anti-holomorphic in $w$.
     
     \noindent We claim that
     \[K_{\infty}\equiv K_{\Omega_\infty}.\]
  To prove this claim, it is sufficient to show
  \begin{equation}\label{diag}
      K_{\infty}(z, z)\equiv K_{\Omega_\infty}(z,z),
  \end{equation}
 since the difference $K_{\infty}(z, \overline{w})-K_{\Omega_\infty}(z, \overline{w})$ is holomorphic in $ \mathbb B^n(b^*,c_2)\times \mathbb B^n(b^*,c_2)^*$. Here, $c_2>0$ is sufficiently small such that $\mathbb B^n(b^*,c_2)^*=\{\overline z: z\in \mathbb B^n(b^*,c_2)\}\Subset\Omega_\infty$. The above arguments also show that any convergent subsequence of $K_{\widetilde \Omega_j}(z,w)$ have to converge to $K_{\Omega_\infty}(z,w)$, and hence $K_{\widetilde\Omega_j}(z,w)$ itself converges to $K_{\Omega_\infty}(z,w)$. Moreover, the convergence of derivatives follows from the harmonicity of the functions $K_{\widetilde\Omega_j}(z,w)$.
 
We now prove (\ref{diag}). Let $z\in\Omega_\infty$. For all $j$ large, we have $z\in\widetilde\Omega_j'\subset\widetilde\Omega_j$. Then, using the transformation rule for the Bergman kernel,
\begin{align*}\label{uniq-of-limit}
       \lim_{j\to \infty}K_{\widetilde{\Omega}_j}(z)=\lim_{j\to \infty}\frac{K_{\Omega}\Big(S_j^{-1}(z)\Big)}{K_{U\cap\Omega}\Big(S_j^{-1}(z)\Big)}K_{\widetilde\Omega_j'}(z).
     \end{align*}
By Lemma \ref{phi-ij-inverse}, 
\[ S_j^{-1}(z)\to 0\in\partial\Omega.\]
Therefore, the localization of the Bergman kernel concludes our claim.
\end{proof}

\section{Boundary asymptotics}
In this section, we establish Theorems \ref{loc} and \ref{bdy-n}. To begin, we note the following. Recall that $S_j=T_j \circ \Psi_j$, $S_j(U\cap \Omega)=\widetilde \Omega_j'$, $S_j(\Omega)=\widetilde \Omega_j$, $S_j(\zeta^j)=b^{*}=('0,-1)$,  and 
\begin{equation}\label{der-S_j}
S_j'(\zeta^j)X=
\left(\frac{'(\mathsf{Q}_jX)}{\sqrt{\eta_j}}, \frac{(\mathsf{Q}_jX)_n}{\eta_j}\right),
\end{equation}
where $\mathsf Q_j=\Psi_j'(\zeta^j)$, and $\zeta^j=p^j-\delta_j\nabla_{\overline z}r(p^j)/|\nabla_{\overline z}r(p^j)|$. In what follows, we will repeatedly make use of (\ref{eta-j/de-j}), (\ref{conv-Q-j}), (\ref{det-T-matrix}), (\ref{der-Phi}), and (\ref{der-S_j}), without explicitly referring to them each time. We also need the following Lemma.
\begin{lemma}\label{stability-Fuks}
For $z \in \Omega_\infty,$ and $X\in \mathbb{C}^n \setminus \{0\}$, the following limits hold as $j\to\infty$,
\begin{align*}
 g_{\tilde b, \widetilde \Omega_j}(z)\to g_{\tilde b, \Omega_\infty}(z), \quad \beta_{\tilde b, \widetilde \Omega_j}(z)\to \beta_{\tilde b, \Omega_\infty}(z),
\end{align*}
and also
\begin{align*}
& ds_{\tilde b, \widetilde \Omega_j}(z, X)\to ds_{\tilde b, \Omega_{\infty}}(z,X),  \quad  R_{\tilde b, \widetilde \Omega_j}(z, X)\to R_{\tilde b, \Omega_{\infty}}(z,X),\quad  \Ric_{\tilde b, \widetilde \Omega_j}(z, X)\to \Ric_{\tilde b, \Omega_{\infty}}(z,X).
\end{align*}
 Furthermore, the first and second convergences are uniform on compact subsets of \,$\Omega_{\infty}$ and the third, fourth and fifth convergences are uniform on compact subsets of\, $\Omega_{\infty}\times \mathbb C^n$.
\end{lemma}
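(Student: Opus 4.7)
The plan is to derive the stability of every Kobayashi--Fuks invariant in the statement from the already-established local-uniform convergence with all derivatives $K_{\widetilde\Omega_j}(z,w)\to K_{\Omega_\infty}(z,w)$ provided by Lemma~\ref{ram}. Each invariant is an algebraic-differential functional of $K_\Omega$ in finitely many of its holomorphic / anti-holomorphic derivatives, so the lemma is essentially a continuity-of-compositions argument.

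First I would fix a compact set $E\subset\Omega_\infty$ and observe that $K_{\Omega_\infty}(z,z)>0$ on $E$ and that the Bergman matrix $G_{b,\Omega_\infty}(z)$ is uniformly positive definite on $E$ (both are classical facts about the Siegel upper half-space $\Omega_\infty$). By Lemma~\ref{ram}, these properties transfer to $\widetilde\Omega_j$ for $j$ large, uniformly in $z\in E$: $K_{\widetilde\Omega_j}(z,z)$ is uniformly bounded away from $0$, and $G_{b,\widetilde\Omega_j}(z)$ is uniformly positive definite, since its entries (which are fixed second derivatives of $\log K$) converge uniformly to those of $G_{b,\Omega_\infty}(z)$. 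I would then unwind the definitions in the order
\[
K\;\longmapsto\;g^{b}_{\alpha\overline\beta}\;\longmapsto\;g_{b}=\det G_b\;\longmapsto\;\Ric^{b}_{\alpha\overline\beta}\;\longmapsto\;g^{\tilde b}_{\alpha\overline\beta}\;\longmapsto\;g_{\tilde b}\;\longmapsto\;\beta_{\tilde b},
\]
with a parallel chain producing the fourth-order tensor $R^{\tilde b}_{\overline\alpha\beta\gamma\overline\delta}$ and $\Ric^{\tilde b}_{\alpha\overline\beta}$. Each step involves only partial differentiation, determinant, matrix inversion, logarithm, or division, all of which are smooth functions of their inputs wherever the relevant matrix is positive definite and the relevant scalar is positive. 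Hence the uniform-with-all-derivatives convergence of $K_{\widetilde\Omega_j}$ propagates up the chain and yields uniform-on-$E$ convergence of $g_{\tilde b,\widetilde\Omega_j}$, $\beta_{\tilde b,\widetilde\Omega_j}$, and of all the tensor coefficients underlying the curvatures.

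For the $X$-dependent invariants I would argue as follows: $ds^2_{\tilde b,\widetilde\Omega_j}(z,X)$ is a Hermitian form in $X$ whose coefficients converge uniformly on $E$ by the preceding step, while $R_{\tilde b,\widetilde\Omega_j}(z,X)$ and $\Ric_{\tilde b,\widetilde\Omega_j}(z,X)$ are quotients of positive-homogeneous forms and are therefore homogeneous of degree zero in $X$. Uniform positive definiteness of $G_{\tilde b,\widetilde\Omega_j}(z)$ on $E$ for $j$ large makes the denominators in \eqref{hsc-h} and the analogous formula for $\Ric_{\tilde b}$ uniformly bounded below on $E\times\{\|X\|=1\}$, so convergence is uniform there; homogeneity then promotes it to uniform convergence on compact subsets of $\Omega_\infty\times\mathbb C^n$ for $ds_{\tilde b}$ and on compact subsets of $\Omega_\infty\times(\mathbb C^n\setminus\{0\})$ for the scale-invariant $R_{\tilde b}$ and $\Ric_{\tilde b}$. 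The only obstacle really worth naming is that the uniform positive definiteness / positivity must be carried along at each step so that the successive determinants, inverses and logarithms remain smooth functions of their inputs; this is exactly what Lemma~\ref{ram} (convergence in every derivative) guarantees when combined with the well-understood Bergman geometry of $\Omega_\infty$.
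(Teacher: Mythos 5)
Your argument is correct and follows the same route as the paper: both derive every convergence from Lemma~\ref{ram} by noting that the Kobayashi--Fuks metric has the K\"ahler potential $\log\bigl(K_\Omega^{n+1}\,g_{b,\Omega}\bigr)$ and that uniform-with-all-derivatives convergence of the Bergman kernel propagates through the finitely many algebraic-differential operations producing each invariant. You merely spell out the chain of operations (derivatives, determinants, inverses, logarithms, quotients) and the positive-definiteness bookkeeping in more detail, and make explicit the degree-zero homogeneity argument that yields uniformity on compacts of $\Omega_\infty\times\mathbb{C}^n$, which the paper leaves implicit.
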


\begin{proof}
The Kobayashi--Fuks metric on a bounded domain 
$\Omega$ has a K\"ahler potential $\log(K^{n+1}_{\Omega}g_{b,\Omega})$, i.e.,
\[
g^{\tilde b,\Omega}_{\alpha \overline \beta} = \frac{\partial^2 \log(K^{n+1}_{\Omega}g_{b,\Omega})}{\partial z_{\alpha}\partial \overline z_{\beta}}.
\]
To complete the proof, it suffices to show that
\[
K^{n+1}_{\widetilde\Omega_j}g_{b,\widetilde\Omega_j} \to K^{n+1}_{\Omega_{\infty}}g_{b,\Omega_\infty}
\]
uniformly on compact subsets of $\Omega_{\infty}$, along with all derivatives. This is an immediate consequence of Lemma \ref{ram}. 
\end{proof}
\begin{remark}\label{rem1}
    Lemma \ref{stability-Fuks} also applies to \(\widetilde{\Omega}_j'\), which can be seen by using Lemma \ref{local-stab}.
\end{remark}

\subsection{Proof of Theorem~\ref{bdy-n}} Before proving this theorem, we establish the following Lemma.
\begin{lemma}\label{comps-on-ball}
For the unit ball $\mathbb{B}^n \subset \mathbb{C}^n$, we have
\begin{itemize}
\item [(a)] $g_{\tilde b, \mathbb{B}^n}(z) =\dfrac{(n+1)^n(n+2)^n}{(1-\vert z\vert^2)^{n+1}}$,
\item[(b)] $\beta_{\tilde b, \mathbb{B}^n}(z) =(n+1)^{n}(n+2)^{n} \dfrac{\pi^n}{n!}$,
\item[(c)] $ R_{\tilde b, \mathbb{B}^n}(z,X) =-\dfrac{2}{(n+1)(n+2)},\quad X\in\mathbb{C}^n\setminus\{0\},$
\item[(d)] $\Ric_{\tilde b, \mathbb{B}^n}(z,X) =-\dfrac{1}{(n+2)},\quad X\in\mathbb{C}^n\setminus\{0\}.$
\end{itemize}
\end{lemma}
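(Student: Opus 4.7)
The plan is to reduce every statement to a computation for the Bergman metric on $\mathbb B^n$ via the identity from Example~\ref{ball},
\[
g^{\tilde b, \mathbb B^n}_{\alpha\overline\beta}(z) = (n+2)\, g^{b,\mathbb B^n}_{\alpha\overline\beta}(z),
\]
together with two elementary scaling rules for curvature and the classical formula $K_{\mathbb B^n}(z,z)=n!/\bigl(\pi^n(1-|z|^2)^{n+1}\bigr)$.

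For parts (a) and (b), I would first differentiate $\log K_{\mathbb B^n}(z,z)$ to obtain
\[
g^{b,\mathbb B^n}_{\alpha\overline\beta}(z)=(n+1)\left(\frac{\delta_{\alpha\beta}}{1-|z|^2}+\frac{\overline z_\alpha z_\beta}{(1-|z|^2)^2}\right),
\]
so that $G_{b,\mathbb B^n}(z) = \tfrac{n+1}{1-|z|^2}\bigl(\mathsf I_n + \tfrac{1}{1-|z|^2}\,\overline z\,z^t\bigr)$. The matrix-determinant lemma $\det(\mathsf I_n+u v^t) = 1+v^tu$ immediately gives $g_{b,\mathbb B^n}(z) = (n+1)^n/(1-|z|^2)^{n+1}$, and multiplying by $(n+2)^n$ yields (a). Dividing the result of (a) by $K_{\mathbb B^n}(z,z)$ yields (b).

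For (c) and (d) I would invoke two elementary scaling rules: when a K\"ahler metric $g$ is replaced by $cg$ for a positive constant $c$, the Ricci tensor components $\Ric_{\alpha\overline\beta}=-\partial^2\log\det g/\partial z_\alpha\partial\overline z_\beta$ are unchanged (since $\log\det(cg)$ differs from $\log\det g$ by the additive constant $n\log c$), while inspection of \eqref{Ric_h-curv} and \eqref{hsc-h} shows that both $\Ric_{h,\Omega}(z,X)$ and $R_{h,\Omega}(z,X)$ rescale by the factor $1/c$. For the Bergman metric on $\mathbb B^n$, the identity $\log g_{b,\mathbb B^n}(z)=n\log(n+1)-(n+1)\log(1-|z|^2)$ obtained above immediately gives $\Ric^{b,\mathbb B^n}_{\alpha\overline\beta}=-g^{b,\mathbb B^n}_{\alpha\overline\beta}$, i.e., $\Ric_{b,\mathbb B^n}(z,X)\equiv -1$; the scaling rule then produces (d).

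For the Bergman holomorphic sectional curvature, biholomorphic invariance combined with the transitivity of $\mathrm{Aut}(\mathbb B^n)$ and $U(n)$-invariance at the origin forces $R_{b,\mathbb B^n}(z,X)$ to be a constant, so it suffices to evaluate it at $z=0$. There $g^{b,\mathbb B^n}_{\alpha\overline\beta}(0)=(n+1)\delta_{\alpha\beta}$ and all first-order derivatives of $g^{b,\mathbb B^n}_{\alpha\overline\beta}$ vanish, so the Christoffel-type term in $R^{b,\mathbb B^n}_{\overline\alpha\beta\gamma\overline\delta}$ drops out and a short computation yields $R^{b,\mathbb B^n}_{\overline\alpha\beta\gamma\overline\delta}(0)=-(n+1)(\delta_{\alpha\beta}\delta_{\gamma\delta}+\delta_{\alpha\gamma}\delta_{\beta\delta})$, whence $R_{b,\mathbb B^n}(0,X)=-2/(n+1)$. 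The scaling rule then gives (c). No serious obstacle is anticipated here; the entire proof is essentially unit-ball bookkeeping combined with the elementary scaling behaviour of the curvature expressions under a constant rescaling of the K\"ahler metric.
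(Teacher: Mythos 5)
Your proposal is correct and follows essentially the same route as the paper: reduce everything to the Bergman metric via the constant factor $n+2$ from Example~\ref{ball}, then compute for $\mathbb B^n$ using the explicit Bergman kernel. The paper derives the same four scaling relations $g_{\tilde b}=(n+2)^n g_b$, $\beta_{\tilde b}=(n+2)^n\beta_b$, $R_{\tilde b}=R_b/(n+2)$, $\Ric_{\tilde b}=\Ric_b/(n+2)$ and then simply asserts that a ``straightforward calculation'' gives $R_{b,\mathbb B^n}\equiv -2/(n+1)$ and $\Ric_{b,\mathbb B^n}\equiv -1$; you supply the details that sit behind that assertion (the constant-rescaling behaviour of \eqref{Ric_h-curv} and \eqref{hsc-h}, the observation $\Ric^{b,\mathbb B^n}_{\alpha\overline\beta}=-g^{b,\mathbb B^n}_{\alpha\overline\beta}$, and the evaluation of the curvature tensor at $z=0$ using homogeneity), which is a welcome elaboration but not a different method.
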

\begin{proof}
From Example \ref{ball},
\[
g^{\tilde b, \mathbb{B}^n}_{\alpha \overline \beta}(z)=(n+2)g^{b, \mathbb{B}^n}_{\alpha\overline \beta}(z).
\]
This implies \begin{align*}
g_{\tilde b,\mathbb{B}^n}&=(n+2)^ng_{b,\mathbb{B}^n},\\
   \beta_{\tilde b,\mathbb{B}^n}&=(n+2)^n\beta_{b,\mathbb{B}^n},\\
  R_{\tilde b, \mathbb{B}^n}(z,X)&=\frac{1}{n+2} R_{b, \mathbb{B}^n}(z,X),\\
\Ric_{\tilde b, \mathbb{B}^n}(z,X)&=\frac{1}{n+2} \Ric_{b, \mathbb{B}^n}(z,X).
\end{align*}
Recall
\[ K_{\mathbb{B}^n}(z)=\frac{n!}{\pi^n}\frac{1}{(1-\vert z \vert^2)^{n+1}},\]  
and
\[g^{b, \mathbb{B}^n}_{\alpha\overline \beta}(z)=(n+1)\sum_{\alpha,\beta=1}^n \left(\frac{\delta_{\alpha\overline\beta}}{1-\vert z \vert^2}+\frac{\overline z_{\alpha}z_{\beta}}{(1-\vert z \vert^2)^2}\right) dz_{\alpha} d\overline z_{\beta}.\]
This implies that
\[
g_{b,\mathbb{B}^n}(z)=\frac{(n+1)^n}{(1-\vert z \vert^2)^{n}}\Bigg(1+\frac{|z|^2}{(1-|z|^2)}\Bigg)=\frac{(n+1)^n}{(1-\vert z \vert^2)^{n+1}},
\]
and hence
\[\beta_{ b, \mathbb{B}^n}(z)=\frac{g_{b,\mathbb{B}^n}(z)}{K_{\mathbb{B}^n}(z)}=(n+1)^{n} \frac{\pi^n}{n!}.\]
Thus, the proof of (a) and (b) follows. Then, by performing a straightforward calculation for the Bergman metric, we have
\[R_{b,\mathbb{B}^n}(z, X)=-\frac{2}{n+1}\text{ and }\Ric_{b,\mathbb{B}^n}(z, X)=-1,\]
for any $X\in\mathbb{C}^n\setminus\{0\}$, which completes the proof of (c) and (d).
\end{proof}

We are now ready for the proof of Theorem~\ref{bdy-n}.
\begin{proof}[Proof of Theorem~\ref{bdy-n}]
Using Lemma \ref{stability-Fuks}, the proofs of (a), (b), and (c) follow directly from the reasoning outlined in \cite[Theorem 1.2]{bk22} for $\widetilde\Omega_j'$. Hence, it remains to establish (d), (e), and (f). To proceed, we again utilize Lemma \ref{stability-Fuks} and Lemma \ref{comps-on-ball}, along with the transformation rules for the Kobayashi–Fuks metric and the Bergman kernel.
\begin{itemize}
    \item [(d)] We have 
\begin{equation*}
    \beta_{\tilde b, \Omega}(\zeta^j) = \beta_{\tilde b, \widetilde\Omega_j}(b^*)  \to \beta_{\tilde b,\Omega_{\infty}}(b^*) =\beta_{\tilde b,\mathbb{B}^n}(0)=(n+1)^n(n+2)^n\frac{\pi^n}{n!}.
\end{equation*}
    
\item[(e)]  For any $X\in\mathbb C^n\setminus\{0\}$, let $\hat{X}= \lim_{j \to \infty} \mathsf{T}_j\cdot\mathsf{Q}_j(X)/\vert \mathsf{T}_j\cdot\mathsf{Q}_j(X)\vert$. Then
\begin{multline*}
R_{\tilde b,\Omega}(\zeta^j,v) =R_{\tilde b,\widetilde\Omega_j}(b^*, \mathsf{T}_j\cdot\mathsf{Q}_j(X)) = R_{\tilde b,\widetilde\Omega_j}\left(b^*, \frac{\mathsf{T}_j\cdot\mathsf{Q}_j(X)}{\vert \mathsf{T}_j\cdot\mathsf{Q}_j(X)\vert}\right) \\ \to R_{\tilde b,\Omega_\infty}\big(b^*, \hat{X}\big)
=R_{\tilde b,\mathbb{B}^n}\big(0, \Phi'(b^*)\hat{X}\big) =-\frac{2}{(n+1)(n+2)}.
\end{multline*}
\item[(f)] Proceeding as in (e), we obtain
\[
\Ric_{\tilde b,\Omega}(\zeta^j,X) \to \Ric_{\tilde b,\mathbb{B}^n}\big(0, \Phi'(b^*)\hat{X}\big) = -\frac{1}{n+2}.
\]

\end{itemize}
Hence, we are done.
\end{proof}

\begin{proof}[Proof of Theorem~\ref{loc}]   By the transformation rules for the  Bergman kernel and the Kobayashi-Fuks metric, we have  
\[  
\frac{g_{\tilde b, U \cap \Omega}(\zeta^j)}{g_{\tilde b, \Omega}(\zeta^j)} = \frac{g_{\tilde b, \widetilde{\Omega}_j'}(b^*)}{g_{\tilde b, \widetilde{\Omega}_j}(b^*)} \quad \text{and} \quad \frac{\beta_{\tilde b, U \cap \Omega}(\zeta^j)}{\beta_{\tilde b, \Omega}(\zeta^j)} = \frac{\beta_{\tilde b, \widetilde{\Omega}_j'}(b^*)}{\beta_{\tilde b, \widetilde{\Omega}_j}(b^*)}.  
\]  
Using Lemma \ref{stability-Fuks} and the Remark \ref{rem1}, it follows that
\[\frac{g_{\tilde b, U \cap \Omega}(\zeta^j)}{g_{\tilde b, \Omega}(\zeta^j)} \to \frac{g_{\tilde b, \Omega_\infty}(b^*)}{g_{\tilde b, \Omega_\infty}(b^*)}=1\quad \text{and} \quad \frac{\beta_{\tilde b, U \cap \Omega}(\zeta^j)}{\beta_{\tilde b, \Omega}(\zeta^j)}\to \frac{\beta_{\tilde b, \Omega_\infty}(b^*)}{\beta_{\tilde b, \Omega_\infty}(b^*)}=1\]
as \(j \to \infty\). Next, by the invariance of the metric, we have
\begin{multline*}  
\frac{ds_{\tilde b, U \cap \Omega}(\zeta^j, X)}{ds_{\tilde b, \Omega}(\zeta^j, X)} = \frac{ds_{\tilde b, \widetilde{\Omega}_j'}\Big(b^*, S_j'(\zeta^j) X\Big)}{ds_{\tilde b, \widetilde{\Omega}_j}\Big(b^*, S_j'(\zeta^j) X\Big)}   
= \frac{\eta_j ds_{\tilde b, \widetilde{\Omega}_j'}\left(b^*, \left(\frac{'(\mathsf{Q}_j X)}{\sqrt{\eta_j}}, \frac{(\mathsf{Q}_j X)_n}{\eta_j}\right)\right)}{\eta_j ds_{\tilde b, \widetilde{\Omega}_j}\left(b^*, \left(\frac{'(\mathsf{Q}_j X)}{\sqrt{\eta_j}}, \frac{(\mathsf{Q}_j X)_n}{\eta_j}\right)\right)} \\  
= \frac{ds_{\tilde b, \widetilde{\Omega}_j'}\Big(b^*, \left(\sqrt{\eta_j} '(\mathsf{Q}_j X), (\mathsf{Q}_j X)_n\right)\Big)}{ds_{\tilde b, \widetilde{\Omega}_j}\Big(b^*, \left(\sqrt{\eta_j} '(\mathsf{Q}_j X), (\mathsf{Q}_j X)_n\right)\Big)}  
\to \frac{ds_{\tilde b, \Omega_\infty}\left(b^*, \left('0, X_n\right)\right)}{ds_{\tilde b, \Omega_\infty}\left(b^*, \left('0, X_n\right)\right)} = 1.  
\end{multline*}  

Moreover, the uniform convergence on \(\{\|X\| = 1\}\) follows directly from Lemma \ref{stability-Fuks} and  Remark \ref{rem1}. Similarly, the remaining assertions can be established. Hence, the proof follows. 
\end{proof}

\begin{remark}
The analogue version of Lemma \ref{ram} holds for the Narasimhan-Simha type metric of order $d\geq 0$, using the localization of the weighted Bergman kernel (cf. \cite[Theorem 3.4]{BV-ns})  Therefore, the study of the invariants associated with the Narasim

\noindent -han--Simha type metric of order $d$  can be simplified too.
\end{remark}

%----------------------------------------------------------------------------------------------------------------------

\end{document}